\documentclass[12pt]{amsart}
\topmargin -1.3cm
\textwidth 160mm
\textheight 230mm
\oddsidemargin 0.6cm
\evensidemargin 0.6cm
\usepackage[utf8]{inputenc}
\usepackage{graphicx}
\usepackage{epstopdf}
\usepackage{inputenc}
\usepackage{enumitem}
\usepackage{amsmath,fullpage}
\usepackage{amssymb}
\usepackage{emptypage}
\usepackage{mathtools}
\usepackage[usenames,dvipsnames,x11names,svgnames]{xcolor}
\usepackage[colorlinks=true,linkcolor=NavyBlue,citecolor=DarkGreen, urlcolor=blue]{hyperref}
\usepackage{dsfont}

\newtheorem{theorem}{Theorem}
\newtheorem*{theorem*}{Theorem}

\newtheorem{corollary}{Corollary}[section]
\newtheorem{lemma}{Lemma}

\newtheorem*{acknowledgements*}{Acknowledgements}

\newcommand{\geqs}{\geqslant }

\makeatletter
\def\blfootnote{\gdef\@thefnmark{}\@footnotetext}
\makeatother

\def\house#1{\setbox1=\hbox{$\,#1\,$}%
\dimen1=\ht1 \advance\dimen1 by 2pt \dimen2=\dp1 \advance\dimen2 by 2pt
\setbox1=\hbox{\vrule height\dimen1 depth\dimen2\box1\vrule}%
\setbox1=\vbox{\hrule\box1}%
\advance\dimen1 by .4pt \ht1=\dimen1
\advance\dimen2 by .4pt \dp1=\dimen2 \box1\relax}

\begin{document}
	\title[Extreme values of the Dedekind Zeta function \\ on the critical line]{Extreme values of the Dedekind Zeta function \\ on the critical line}

\author[P. Nyadjo Fonga]{{Patrick Nyadjo Fonga}}
\address{Sorbonne Paris Nord university } 
\email{ patrick.nyadjofonga@edu.univ-paris13.fr }

	\maketitle

 \thispagestyle{empty}
 
	\begin{abstract}
	By employing the assessment of the asymptotic size of various sums of Gál studied by La Bretèche and Tenenbaum, we provide an improvement on the recent result of A. Bondarenko, P. Darbar, M. V. Hagen, W. Heap, and K. Seip regarding the large values of the Dedekind zeta-function on the critical line. Specifically, let $d\geqslant 3$ be an integer and $A$ be a positive constant. Denoting $K=\mathbb{Q}(\zeta_d)$, we establish that, if $T$ is sufficiently large, then uniformly for $d \ll (\log\log T)^A$ ,

\begin{equation*}
\max_{ t \in [0,T]}\left|\zeta_K \left(\frac{1}{2}+it \right) \right| \gg \exp\left({(1+o(1))\varphi(d)} \sqrt{\frac{\log T \log \log \log T}{\log \log T}} \right).
\end{equation*}
	\end{abstract}

\section{introduction}
Significant progress has been made in the study of extreme values of the Riemann zeta function on the critical line over the past five years. The most recent result improving that of Bondarenko and Seip \cite{BS1}, established by La Bretèche and Tenenbaum \cite{dlBT} states that; for a sufficiently large $T$, we have
\begin{equation} \label{eqdlBT}
\max_{t\in[0,T]} \left|\zeta \left(\frac{1}{2}+it \right) \right|
	\geqs \mathcal{L}(T)^{\sqrt{2}+o(1)},
\end{equation}
where the notation is defined as
\begin{equation*}
\mathcal{L}(x):=\exp \left\{\sqrt{\frac{\log x \log _3 x}{\log _2 x}} \right\} \quad(x>16),
\end{equation*}
and $\log_k(x)$ represents the logarithm function iterated $k$ times on $x$.
However, for $K=\mathbb{Q}\left(\zeta_d\right)$ (where $\zeta_d$ is a primitive $d$-th root of unity with $d$ being an integer greater than $3$), its close connection with the Dedekind zeta function, given by the relation
\begin{equation}\label{eq1}
\zeta_{K}(s)=\zeta(s) \prod_{\chi \neq \chi_0(\bmod d)} L\left(s, \chi^{*}\right),
\end{equation}
has motivated the recent study conducted by A. Bondarenko, P. Darbar, M. V. Hagen, W. Heap, and K. Seip \cite{A} on the large values of the Dedekind zeta function on the critical line. Here, $\chi_0$ represents the principal Dirichlet character modulo $d$, and the product is taken over all non-principal Dirichlet characters $\chi$ modulo $d$. The character $\chi^*$ is defined as the character that induces $\chi$ if $\chi$ is not primitive and $\chi^* = \chi$ otherwise. 
\newpage
They demonstrated that for an arbitrary positive constant $A$, if $T$ is sufficiently large, then uniformly for $d \ll (\log_2 T)^A$, there exists $t \in [0,T]$ such that
\begin{equation*}
\left|\zeta_K \left(\frac{1}{2}+it \right) \right| \gg \mathcal{L}(T)^{(1+o(1))\sqrt{\varphi(d)}}.
\end{equation*}
In this paper, we establish the following theorem, which improves this result by a factor of $\sqrt{\varphi (d)}$ in the exponent of the right-hand side:

\begin{theorem}\label{main thm}
Let $d \geqslant 3$, $T > 16$, and $K=\mathbb{Q}\left(\zeta_d\right)$. Let $A$ an arbitrary positive number. If $T$ is sufficiently large, then uniformly for $d \ll (\log_2 T)^A$, we have
\begin{equation*} \label{eqq1}
\max_{ t \in [0,T]}\left|\zeta_K \left(\frac{1}{2}+it \right) \right| \gg \mathcal{L}(T)^{(1+o(1))\varphi(d)}.
\end{equation*}
\end{theorem}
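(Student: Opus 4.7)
The plan is to follow the resonator method of Soundararajan, extended to the Dedekind setting by Bondarenko, Darbar, Hagen, Heap, and Seip, but with the Gál-sum analysis sharpened in the spirit of La Bretèche--Tenenbaum. The starting point is the elementary inequality
\[
\max_{t \in [0,T]}\bigl|\zeta_K(\tfrac{1}{2}+it)\bigr|\cdot \int |R(t)|^2 \Phi(t/T)\,dt \;\geqslant\; \left|\int \zeta_K(\tfrac{1}{2}+it)\,|R(t)|^2\, \Phi(t/T)\,dt\right|,
\]
where $\Phi$ is a fixed smooth, nonnegative cutoff with suitable compact support, and $R(t)=\sum_{n\in\mathcal{N}} r(n)\,n^{it}$ is a resonator whose support $\mathcal{N}$ and multiplicative weights $r$ I will design so that the ratio is as large as possible.

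I would begin by producing an approximate functional equation for $\zeta_K(\tfrac{1}{2}+it)$ of length $X \asymp T^{\varphi(d)/2}$, writing it as a truncated Dirichlet sum $\sum_{k \leqslant X} a_K(k) k^{-1/2-it}$ with controlled remainder. Substituting this together with $|R(t)|^2 = \sum_{m,n} r(m) r(n)(m/n)^{it}$ and carrying out the $t$-integration, the leading contribution comes from the near-diagonal pairs $n=mk$, producing a main term of shape
\[
T\,\widehat{\Phi}(0)\sum_{\substack{m,n\in\mathcal{N}\\ m\mid n}} r(m)r(n)\,\frac{a_K(n/m)}{\sqrt{n/m}},
\]
while the denominator collapses to $T\,\widehat{\Phi}(0)\sum_{n\in\mathcal{N}}r(n)^2$ up to lower-order off-diagonal terms. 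The problem thus reduces to lower-bounding a Gál-type quotient
\[
G_K(r,\mathcal{N})\;=\;\frac{\sum_{m\mid n,\; m,n\in\mathcal{N}}r(m)r(n)\,a_K(n/m)\,(n/m)^{-1/2}}{\sum_{n\in\mathcal{N}}r(n)^2}.
\]

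To exploit the arithmetic of $K=\mathbb{Q}(\zeta_d)$, I would restrict the support $\mathcal{N}$ to squarefree integers built out of primes that \emph{split completely} in $K$, namely primes $p\equiv 1\pmod d$; for any such $p$ one has $a_K(p)=\varphi(d)$, so each local factor of the numerator carries the factor $\varphi(d)$. The resulting density loss $1/\varphi(d)$ is precisely what the refined Gál-sum technology of La Bretèche--Tenenbaum converts into a \emph{gain} of $\sqrt{\varphi(d)}$ in the exponent relative to the Bondarenko--Darbar--Hagen--Heap--Seip bound: with a suitably optimized multiplicative weight $r(n)=\prod_{p\mid n}w(p)$ concentrated on primes in a narrow range tied to $\log T\log_2 T$, a direct adaptation of the La Bretèche--Tenenbaum optimization of such Gál sums delivers $G_K(r,\mathcal{N})\;\geqslant\;\mathcal{L}(T)^{(1+o(1))\varphi(d)}$, which is the announced bound.

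The main obstacle I anticipate is keeping every estimate uniform for $d$ up to $(\log_2 T)^A$. The length $X=T^{\varphi(d)/2}$ of the approximate functional equation grows with $d$, so the off-diagonal contributions to both integrals must be carefully shown to remain of smaller order than the diagonal main terms, forcing a delicate compatibility condition between $|\mathcal{N}|$, the size of its prime reservoir, and $X$. Equally, one must transplant the La Bretèche--Tenenbaum maximization of Gál sums to the restricted prime set $\{p\equiv 1\pmod d\}$ and verify that all implicit constants and $o(1)$ terms remain uniform in $d$ throughout the allowed range, which should require quantitative Siegel--Walfisz input for primes in progressions combined with sieve bounds for smooth integers in progressions having sufficiently strong error terms.
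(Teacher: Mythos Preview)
Your overall plan---restrict the resonator support to integers built from primes $p\equiv 1\pmod d$, use that $a_K(p)=\varphi(d)$ for such primes, and push the La Bret\`eche--Tenenbaum G\'al-sum optimization through this thinned prime set---is exactly the paper's strategy. Two aspects of your implementation, however, diverge from the paper in ways that would cause real trouble.

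First, you propose to insert an approximate functional equation for $\zeta_K(\tfrac12+it)$ of length $X\asymp T^{\varphi(d)/2}$. In the uniformity range $d\ll(\log_2 T)^A$ this length can be as large as $T^{(\log_2 T)^A}$, which is far beyond any polynomial in $T$; controlling off-diagonal contributions from a Dirichlet polynomial of that length against a resonator supported on a set of size $\leqslant T^{1-\beta}$ is not feasible. The paper avoids approximate functional equations entirely: it works with the smoothed product
\[
\int_{\mathbb{R}}\zeta_K\bigl(\tfrac12+it+iu\bigr)\,\zeta_K\bigl(\tfrac12-it+iu\bigr)\,K_\eta(u)\,du,
\]
and evaluates it \emph{exactly} by contour shifting (Lemma~3), picking up two explicit residues. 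The kernel $K_\eta$, with $\eta=2\varphi(d)$, is chosen so that its decay $|K_\eta(u)|\ll (d|u|)^{-2\eta}$ beats the convexity bound $|\zeta_K(\tfrac12+iv)|\ll (d|v|)^{\varphi(d)/2}$ uniformly in $d$. This is the mechanism that makes the method uniform in the stated range; your approximate functional equation does not provide an analogue.

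Second, because you integrate a single factor $\zeta_K(\tfrac12+it)$ against $|R(t)|^2$, the diagonal you extract is the \emph{divisor-type} quotient
\[
\frac{\sum_{m\mid n,\;m,n\in\mathcal N} r(m)r(n)\,a_K(n/m)\,(n/m)^{-1/2}}{\sum_n r(n)^2},
\]
whereas the La Bret\`eche--Tenenbaum machinery is built for the symmetric \emph{GCD-sum}
\[
\frac{1}{|\mathcal M|}\sum_{m,n\in\mathcal M} a\!\left(\tfrac{m}{(m,n)}\right)a\!\left(\tfrac{n}{(m,n)}\right)\sqrt{\tfrac{(m,n)}{[m,n]}}.
\]
The paper obtains precisely this bilinear GCD structure from the product $\zeta_K(\cdot)\zeta_K(\overline{\,\cdot\,})$, which is what yields a bound on $Z_\beta(T)^2$. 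The detailed lower bound for the GCD sum (Section~3) hinges on this symmetric form: one unfolds via $n=\sum_{d_1\mid n}\varphi(d_1)$ and exploits the two-sided freedom in the variables, ultimately getting four factors of a truncated sum $\sigma(a,j_k,\cdot)$ rather than two. Your one-sided divisor sum would, at best, recover the square root of the target lower bound.

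Finally, your heuristic that the density loss $1/\varphi(d)$ is ``converted into a gain of $\sqrt{\varphi(d)}$'' undercounts the effect. In the paper the loss is compensated by enlarging the number of dyadic prime intervals from $(\log_2 N)^{\gamma}$ to $(\log_2 N)^{\gamma\varphi(d)}$; combined with the weight $a(p)=\varphi(d)$ on each prime, the optimized exponent $\beta$ in $\mathcal L(N)^{\beta+o(1)}$ comes out to $2\varphi(d)$ (for the squared quantity), hence $\varphi(d)$ for $Z_\beta(T)$. Siegel--Walfisz is indeed the arithmetic input needed for uniformity, as you anticipated.
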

This result provides a more precise formulation of the dichotomy presented in \cite{A} and simultaneously improves (\ref{eqdlBT}) under a substantial condition that we specify in the following corollary:
\begin{corollary} \label{main cor}
Let $A$ an arbitrary positive number. For $T$ sufficiently large, if there exists $d \ll (\log_2 T)^A$ such that for every non-principal Dirichlet character $\chi \pmod d$,
$$
\max _{t \in[0, T]}\left|L\left(\frac{1}{2}+i t, \chi\right)\right| \ll  \mathcal{L}(T)^{1/2+o(1)},
 \quad \text{then} \quad \max _{t \in[0, T]}\left|\zeta\left( \frac{1}{2}+it \right)\right|  \gg  \mathcal{L}(T)^{\left(1/2+o(1) \right) \varphi(d)}.$$  
\end{corollary}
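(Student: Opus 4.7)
The plan is to derive the corollary from Theorem~\ref{main thm} by peeling the $L$-function factors off the factorisation~(\ref{eq1}) of $\zeta_K$. Let $t^{\ast}\in[0,T]$ be a point where $|\zeta_K(1/2+it^{\ast})|$ attains its maximum on $[0,T]$; Theorem~\ref{main thm} delivers
\begin{equation*}
|\zeta_K(1/2+it^{\ast})|\gg \mathcal{L}(T)^{(1+o(1))\varphi(d)}.
\end{equation*}
Evaluating the identity~(\ref{eq1}) at $s=1/2+it^{\ast}$ and isolating the $\zeta$-factor, one obtains
\begin{equation*}
|\zeta(1/2+it^{\ast})|\geqs \frac{|\zeta_K(1/2+it^{\ast})|}{\prod_{\chi\neq\chi_0\,(\mathrm{mod}\,d)}|L(1/2+it^{\ast},\chi^{*})|},
\end{equation*}
so the remaining task is to bound the denominator from above using the hypothesis of the corollary, which controls $|L(1/2+it,\chi)|$ rather than $|L(1/2+it,\chi^{*})|$.

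To reconcile this mismatch I would use the standard relation
\begin{equation*}
L(s,\chi^{*})=L(s,\chi)\prod_{p\mid d,\,p\nmid d_\chi}\bigl(1-\chi^{*}(p)p^{-s}\bigr)^{-1},
\end{equation*}
where $d_\chi$ denotes the conductor of $\chi$. On the critical line each correction factor is bounded by $\prod_{p\mid d}(1-p^{-1/2})^{-1}\leqs \exp\bigl(O(\sqrt d /\log d)\bigr)$ via a Mertens-type estimate. Taking the product over the $\varphi(d)-1$ non-principal characters therefore costs at most $\exp\bigl(O(d^{3/2}/\log d)\bigr)$, which under the assumption $d\ll(\log_2 T)^A$ is $\mathcal{L}(T)^{o(1)}$ and can be absorbed harmlessly into the error terms below.

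Applying the hypothesis $|L(1/2+it^{\ast},\chi)|\ll \mathcal{L}(T)^{1/2+o(1)}$ uniformly over the $\varphi(d)-1$ non-principal characters then bounds the denominator by $\mathcal{L}(T)^{(\varphi(d)-1)(1/2+o(1))+o(1)}=\mathcal{L}(T)^{(1/2+o(1))\varphi(d)-1/2}$. Dividing the lower bound for $|\zeta_K(1/2+it^{\ast})|$ by this yields
\begin{equation*}
\max_{t\in[0,T]}|\zeta(1/2+it)|\gg \mathcal{L}(T)^{(1+o(1))\varphi(d)-((1/2+o(1))\varphi(d)-1/2)}=\mathcal{L}(T)^{(1/2+o(1))\varphi(d)+1/2},
\end{equation*}
which is slightly stronger than the claim. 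The argument is essentially formal once Theorem~\ref{main thm} is granted; the only point requiring real care is checking that the conductor-correction product, together with the accumulation of the $o(1)$ losses across $\varphi(d)-1$ characters, contributes at most $o(\varphi(d))$ to the exponent, which follows comfortably from the uniformity of the hypothesis in $\chi$ and the growth restriction $d\ll(\log_2 T)^A$.
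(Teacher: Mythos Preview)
Your proposal is correct and follows exactly the route the paper has in mind: the paper's own ``proof'' consists of the single sentence ``The corollary follows trivially from this result,'' meaning Theorem~\ref{main thm} combined with the factorisation~(\ref{eq1}), and you have simply supplied the details of that deduction. Your handling of the $\chi$ versus $\chi^{*}$ discrepancy and of the accumulated $o(1)$ losses over $\varphi(d)-1$ characters is the only nontrivial bookkeeping required, and both are dealt with adequately (the precise form of your Mertens-type bound on the Euler-factor correction is a little loose, but any crude estimate of the shape $\exp(O(\varphi(d)\log d))=\mathcal{L}(T)^{o(1)}$ suffices here).
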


\section{Background on the Dedekind zeta function} \label{sec2}
We recall the definition of the Dedekind zeta function of the number field $K= \mathbb{Q}(\zeta_d)$;
$$
\zeta_K(s)=\sum_{n=1}^{+\infty} \frac{a_K(n)}{n^s}=\sum_{I \neq\{0\} \text { ideal of } \mathcal{O}_K} \frac{1}{N(I)^s},
$$
where $a_K(n)$ denotes the number of ideals of the ring of integers $\mathcal{O}_K$ with norm $n$ and $\mathcal{R}e(s) > 1$.
From \cite{Wash}, we have the following theory:

Let $p$ be a prime number. It is known that $p$ is ramified in $K$ if and only if $p \mid d$, and otherwise, if we denote $f$ as the multiplicative order of $p$ in $(\mathbb{Z} / d \mathbb{Z})^{*}$, then $p$ decomposes into $r=\varphi(d)/f$ distinct prime ideals of norm $p^f$.
It follows that
\begin{equation} \label{eq2}
a_K\left(p^k\right)=\left\{\begin{array}{c}
N\left(\frac{k}{f}, r\right) \ \operatorname{if} \ f \mid k \\
0 \ \text { otherwise, }
\end{array}\right.
\end{equation}
where $N\left(\frac{k}{f}, r\right)$ denotes the number of ways to write $k/f$ as a sum of $r$ non-negative integers.\\
To simplify the notation, we use $a$ to denote $a_K$ throughout the rest of the paper. We also introduce the multiplicative function $$a'(n)= \prod_{p|n} \left(\frac{\varphi(d)+1}{2} \right),$$ with $a'(1)=1$.
The relation (\ref{eq2}) directly implies the following lemma.
\begin{lemma} \label{lemma1}
We have:
\begin{enumerate}
    \item For a prime number $p$ such that $p \equiv 1 \mod d$, we have $$a(p)=  \varphi(d) \text{ and } a(p^2)= a(p)a'(p)=\varphi(d)( \varphi(d)+1)/2.$$
    \item If $m$ and $n$ are products of prime factors without square factors, all congruent to 1 modulo $d$, then $$a(mn) \geqslant a'(m)a(n).$$
\end{enumerate}
\end{lemma}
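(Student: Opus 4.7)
The plan is to derive both parts directly from formula (\ref{eq2}) together with the fact that $a = a_K$ is multiplicative (since ideal norms are multiplicative, so ideals of norm $mn$ with $\gcd(m,n)=1$ factor uniquely as products of ideals of coprime norms).

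For part (1), I would first note that $p \equiv 1 \pmod d$ forces the multiplicative order $f$ of $p$ in $(\mathbb{Z}/d\mathbb{Z})^{*}$ to equal $1$, hence $r = \varphi(d)/f = \varphi(d)$. Formula (\ref{eq2}) then reduces to $a(p^k) = N(k,\varphi(d))$ for all $k$. For $k=1$, the compositions of $1$ as a sum of $\varphi(d)$ nonnegative integers are obtained by placing the unique $1$ in one of $\varphi(d)$ positions, giving $a(p) = \varphi(d)$. For $k=2$, a standard stars-and-bars computation gives $N(2,\varphi(d)) = \binom{\varphi(d)+1}{2} = \varphi(d)(\varphi(d)+1)/2$, which matches $a(p)\,a'(p)$ by the definition of $a'$.

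For part (2), write $m = p_1 \cdots p_k$ and $n = q_1 \cdots q_l$ with each prime $\equiv 1 \pmod d$. I would split $mn$ according to whether a prime lies in $\gcd(m,n)$ or in exactly one of $m,n$. By multiplicativity of $a$, applied to the coprime factorization of $mn$ into these prime-power blocks,
\begin{equation*}
a(mn) = \prod_{p \mid \gcd(m,n)} a(p^2) \cdot \prod_{p \mid m,\, p \nmid n} a(p) \cdot \prod_{p \mid n,\, p \nmid m} a(p),
\end{equation*}
while by definition of $a'$ and multiplicativity of $a$,
\begin{equation*}
a'(m)\,a(n) = \prod_{p \mid \gcd(m,n)} \tfrac{\varphi(d)+1}{2} \cdot \varphi(d) \; \cdot \prod_{p \mid m,\, p \nmid n} \tfrac{\varphi(d)+1}{2} \; \cdot \prod_{p \mid n,\, p \nmid m} \varphi(d).
\end{equation*}
Comparing factor by factor using part (1): the contributions from primes in $\gcd(m,n)$ agree exactly, as do those from primes dividing only $n$; the factors from primes dividing only $m$ satisfy $\varphi(d) \geqslant (\varphi(d)+1)/2$, which is equivalent to $\varphi(d) \geqslant 1$ and hence holds for $d \geqslant 3$. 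Multiplying the inequalities yields the claim.

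Neither step presents a real obstacle; the only point that requires a little care is matching the decomposition of $a(mn)$ into prime powers $p^2$ (on shared primes) and $p$ (on non-shared primes) with the corresponding factorization of $a'(m)\,a(n)$, so that the comparison can be carried out termwise.
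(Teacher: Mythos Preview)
Your proof is correct and follows exactly the approach the paper intends: the paper simply states that relation~(\ref{eq2}) ``directly implies'' the lemma without spelling out any details, and your argument is precisely the natural unpacking of that implication via stars-and-bars for part~(1) and a prime-by-prime comparison using multiplicativity for part~(2). There is nothing to add.
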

\section{Extreme value of G\'{a}l sums} \label{sec3}
Here, we utilize the methodology introduced in \cite{BS1} that involves G\'{a}l sums to explore the large values of the Riemann zeta function on the critical line. More specifically, our attention is directed towards the approach presented in \cite{dlBT} that we adapt for analyzing the case of the Dedekind zeta function. It is worth noting that, in a less general context, the G\'{a}l sums are defined as
\begin{equation} \label{eq3}
S_\alpha(\mathcal{M}):=\sum_{m, n \in \mathcal{M}} \frac{(m, n)^\alpha}{[m, n]^\alpha},
\end{equation}
where $\mathcal{M}$ is a finite set of positive integers and $\alpha$ is a positive real parameter. The main idea of \cite{dlBT} is to provide lower bounds for the large values of the Riemann zeta function on the critical line by using a carefully constructed G\'{a}l sum that also yields large values. More specifically, the work of \cite{dlBT} focuses on the G\'{a}l sum $S_{1/2}(\mathcal{M})$ when $\mathcal{M}$ is a specially constructed set. However, due to the non-trivial coefficients of the Dirichlet series associated with the Dedekind zeta function, we are led to study this new variant of the G\'{a}l sums
$$
S_{1/2}(\mathcal{M}, a):=\sum_{m, n \in \mathcal{M}} a\left(\frac{m}{(m,n)} \right)a \left(\frac{n}{(m,n)}\right)\sqrt{\frac{(m, n)}{[m, n]}},
$$
where $a(n)$ denotes the $n$-th Dirichlet coefficient of the Dedekind zeta function, as mentioned in Section \ref{sec2}.
We now adapt the construction of the set $\mathcal{M}$ from \cite{dlBT} to our specific situation. 
\subsection{Construction of the set $\mathcal{M}$} \label{sec3.1}
We introduce the parameters $u \in ]1, \mathrm{e}]$, $b \in ]1,+\infty[$, and $\gamma \in ]0,1[$. Then, we consider a sufficiently large integer $N$. Recall that the objective is to construct a set $\mathcal{M}$ of size $N$ for which the term $S_{1/2}(\mathcal{M},a)$ attains large values. For this purpose, we adopt the construction proposed in \cite{dlBT} as mentioned. We introduce the set:
$$I_k:= \left\{ p \in \left ]  u^k \log N \log _2 N, u^{k+1} \log N \log _2 N \right], p\equiv 1 \mod d \right\} ,\quad 1 \leqslant k \leqslant\left(\log _2 N\right)^{\gamma\varphi(d)}. $$
However, unlike the approach adopted by \cite{dlBT}, where only prime numbers in the intervals $\left]u^k \log N \log_2 N, u^{k+1} \log N \log_2 N\right]$ were considered, here we consider prime numbers $p$ such that $p \equiv 1 \mod d$. This is because they are prime numbers where we can have an explicit value of $a$ as shown by the relation (\ref{eq2}). To compensate for this restriction, we expand the number of intervals $I_k$ by choosing $k \in [1, (\log_2 N)^{\gamma\varphi(d)}]$ instead of $[1, (\log_2 N)^\gamma]$, as done in~\cite{dlBT}. \\
We define
		$${ P_k:=\left|I_k \right|=\pi\left(u^{k+1} \log N \log _2 N, d,1 \right)-\pi\left(u^k \log N \log _2 N, d, 1 \right) },$$
 where we have defined $$\pi(x,d,1) := \sum_{\substack{ p \leqslant x, \\ p \ \equiv   1 \bmod d}} 1.$$
From Siegel–Walfisz theorem, if $d \ll (\log (x))^A$, where $A$ is a positive real number, then $$\pi(x,d,1) \sim \frac{1}{\varphi(d)} \pi(x).$$
This gives for $ d \ll (\log_2(N))^A$ the equivalence
\begin{equation}
   {P_k \sim \frac{1}{\varphi(d)}u^k(u-1) \log N\left\{1+O\left(\frac{k+\log _3 N}{\log _2 N}\right)\right\}}.
\end{equation}
Next, we consider
		\begin{equation*}
		    {J_k}:=2\left\lfloor\frac{b \log N}{2 k^2 {\varphi(d)}  \log _3 N}\right\rfloor , \quad\left(1 \leqslant k \leqslant\left(\log _2 N\right)^{\gamma {\varphi(d)}}\right),
		\end{equation*} 
and we choose $b>1$ such that $b \gamma<1 / \log u$. \\
We denote $N_k:=\prod_{p \in {I_k}} p$, and then we consider the sets
		$$
		\mathcal{M}_k:=\left\{m: m=\frac{\ell }{q } {N_k}, \quad  \omega(\ell), \omega(q) \leqslant \frac{1}{2} J_k, \quad  \ell q \mid {N_k}\right\},
		$$ 
	and
		$$
		\mathcal{M}:=\left\{m=\prod_{1 \leqslant k \leqslant\left(\log _2 N\right)^{\gamma {\varphi(d)}}} m_k: \quad m_k \in \mathcal{M}_k \quad\left(1 \leqslant k \leqslant\left(\log _2 N\right)^{\gamma {\varphi(d)}}\right)\right\}.
		$$
Under the condition $d \ll \log_2 (N)^A$, following Lemma 2 of \cite{dlBT}, we have $|\mathcal{M}| \leqslant N$.
\subsection{Extreme value of $S_{1/2}(\mathcal{M}, a)$}
Here, we utilize the construction of the previous set $\mathcal{M}$ and provide a lower bound estimate for the term $S_{1/2}(\mathcal{M},a)/|\mathcal{M}|$. Observing that
$$S_{1/2}(\mathcal{M},a)=\prod_{1 \leqslant k \leqslant\left(\log _2 N\right)^{\gamma {\varphi(d)}}} S_{1/2}(\mathcal{M}_k,a),$$ we then focus first on the sum $S_{1/2}(\mathcal{M}_k,a)$ for a given $1 \leqslant k \leqslant\left(\log _2 N\right)^{\gamma {\varphi(d)}}$.

\begin{lemma} \label{lemma2}
Let's consider two elements in $\mathcal{M}_k$,
$
m:=\ell/qN_k \text{ and } m':=\ell'/q'N_k,
$
then we have
$$a\left(\frac{m}{\left(m, m'\right)}\right) a\left(\frac{m'}{\left(m, m'\right)}\right) \geqslant \frac{a'(\ell)a'(\ell')a(q)a(q')}{a'\left((\ell, \ell')\right)^2a\left((q, q')\right)^2}.$$
\end{lemma}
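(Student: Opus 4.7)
The plan is a direct prime-by-prime comparison, exploiting the fact that $a$ is multiplicative and that every prime dividing $m, m', \ell, q, \ell', q'$ lies in $I_k$, hence is $\equiv 1 \pmod d$. Since $N_k$ is squarefree and $\ell q \mid N_k$, I would set $s := N_k/(\ell q)$ and $s' := N_k/(\ell' q')$, obtaining disjoint partitions $N_k = \ell \sqcup q \sqcup s = \ell' \sqcup q' \sqcup s'$, so that $m = \ell^2 s$ and $m' = \ell'^2 s'$. Each prime $p \mid N_k$ then falls into exactly one of the nine classes $X \cap Y'$ for $X \in \{\ell,q,s\}$ and $Y' \in \{\ell',q',s'\}$, and it suffices to verify the inequality factor by factor over these classes.

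Next, I would tabulate $v_p(m/(m,m'))$ and $v_p(m'/(m,m'))$ in each class. The calculation shows both valuations vanish for $p \in (\ell\cap\ell')\cup(q\cap q')\cup(s\cap s')$; the six remaining classes yield nontrivial contributions. Applying Lemma \ref{lemma1}(1), which gives $a(p^2) = a(p)a'(p)$ for $p \equiv 1 \pmod d$, the left-hand side factors as
\begin{equation*}
a\!\left(\tfrac{m}{(m,m')}\right)a\!\left(\tfrac{m'}{(m,m')}\right) = \prod_{p \in (\ell\cap q')\cup(q\cap \ell')} a(p)a'(p)\;\cdot\prod_{p \in (\ell\cap s')\cup(s\cap \ell')\cup(s\cap q')\cup(q\cap s')} a(p).
\end{equation*}

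For the right-hand side, I expand each multiplicative factor as a product over its prime divisors: $a'(\ell) = \prod_{p\mid\ell} a'(p)$, and so on. The denominators $a'((\ell,\ell'))^{2}$ and $a((q,q'))^{2}$ exactly cancel the double contributions from $p\in\ell\cap\ell'$ and $p\in q\cap q'$ in the numerator. After simplification, the RHS takes the form
\begin{equation*}
\prod_{p \in (\ell\cap q')\cup(q\cap \ell')} a(p)a'(p)\;\cdot\prod_{p \in (\ell\cap s')\cup(s\cap \ell')} a'(p)\;\cdot\prod_{p \in (s\cap q')\cup(q\cap s')} a(p).
\end{equation*}

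Comparing term by term, LHS and RHS coincide on every class except $(\ell\cap s')\cup(s\cap\ell')$, where LHS carries $a(p)$ and RHS carries $a'(p)$. The lemma therefore reduces to the pointwise inequality $a(p) \geqs a'(p)$, i.e.\ $\varphi(d) \geqs (\varphi(d)+1)/2$, which holds for all $d \geqs 3$. The work is entirely bookkeeping; the only potential pitfall is keeping the nine classes straight and ensuring that the squared valuation for $p\in\ell\cap q'$ (and its mirror) is handled correctly through Lemma \ref{lemma1}(1), so that the $a'(p)$ arising from the squared prime matches the $a'(p)$ produced on the right after denominator cancellation.
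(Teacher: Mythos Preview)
Your proof is correct. The nine-class tabulation is accurate, the application of Lemma~\ref{lemma1}(1) to the squared primes in $(\ell\cap q')\cup(q\cap\ell')$ is right, and the final reduction to $a(p)\geqs a'(p)$, i.e.\ $\varphi(d)\geqs(\varphi(d)+1)/2$, is exactly what is needed.

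The paper's argument reaches the same endpoint by a shorter algebraic route. Rather than splitting into nine classes, it first establishes the closed-form identity
\[
(m,m')=\frac{N_k(\ell,\ell')}{[q,q']},\qquad \frac{m}{(m,m')}=\frac{\ell}{(\ell,\ell')}\cdot\frac{q'}{(q,q')},\qquad \frac{m'}{(m,m')}=\frac{\ell'}{(\ell,\ell')}\cdot\frac{q}{(q,q')},
\]
and then applies Lemma~\ref{lemma1}(2) directly to each factor, together with $a'(\ell/(\ell,\ell'))=a'(\ell)/a'((\ell,\ell'))$ and $a(q/(q,q'))=a(q)/a((q,q'))$. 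This packages your entire case analysis into a single invocation of Lemma~\ref{lemma1}(2); the inequality $a(p)\geqs a'(p)$ is hidden inside that lemma rather than appearing explicitly. Your approach has the virtue of showing precisely \emph{where} the inequality is strict (namely on $(\ell\cap s')\cup(s\cap\ell')$), while the paper's identity-based argument is more compact and avoids the bookkeeping risk you mention.
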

 \begin{proof}
 We have$$
		\left(m, m^{\prime}\right)=\frac{{N_k}}{\left[q, q^{\prime}\right]}\left(\frac{\ell q^{\prime}}{\left(q, q^{\prime}\right)}, \frac{\ell^{\prime} q}{\left(q, q^{\prime}\right)}\right)=\frac{{N_k}\left(\ell, \ell^{\prime}\right)}{\left[q, q^{\prime}\right]},
		$$
	and
		$$
		\frac{m}{\left(m, m^{\prime}\right)}=\frac{\ell}{\left(\ell, \ell^{\prime}\right)} \frac{q^{\prime}}{\left(q, q^{\prime}\right)}, \quad \frac{m^{\prime}}{\left(m, m^{\prime}\right)}=\frac{\ell^{\prime}}{\left(\ell, \ell^{\prime}\right)} \frac{q}{\left(q, q^{\prime}\right)}.
		$$
    Therefore $$a\left( \frac{m}{\left(m, m^{\prime}\right)} \right) a\left( \frac{m'}{\left(m, m^{\prime}\right)} \right) = a\left( \frac{\ell^{\prime}}{\left(\ell, \ell^{\prime}\right)} \frac{q}{\left(q, q^{\prime}\right)} \right) a\left( \frac{\ell}{\left(\ell, \ell^{\prime}\right)} \frac{q'}{\left(q, q^{\prime}\right)} \right).$$ Using Lemma \ref{lemma1} and the fact that  $$ a'\left( \frac{\ell^{\prime}}{\left(\ell, \ell^{\prime}\right)} \right)  = \frac{a'(\ell')}{a'\left(\ell, \ell^{\prime}\right)} \text{  and  } a\left( \frac{q}{\left(q, q^{\prime}\right)} \right)  = \frac{a(q)}{a\left(q, q^{\prime}\right)},  $$ then the proof is complete. 
 \end{proof}
From Lemma \ref{lemma2}, we have
		\begin{equation} \label{eq5}
 S_{1/2}(\mathcal{M}_k,a) \geqslant \sum_{\substack{\ell, \ell^{\prime} \mid {N_k} \\ \omega(\ell), \omega\left(\ell^{\prime}\right) \leqslant J_k / 2}} \frac{\left(\ell, \ell^{\prime}\right)a'(\ell)a'(\ell')}{ a'( (\ell,\ell') ) ^2 \sqrt{\ell \ell^{\prime}}} \sum_{\substack{q, q^{\prime} \mid {N_k} \\\left(q, \ell\right)=\left(q^{\prime}, \ell'\right)=1 \\ \omega(q), \omega\left(q^{\prime}\right) \leqslant J_k / 2}} \frac{\left(q, q^{\prime}\right) a(q)a(q')}{ a( (q,q') )^2 \sqrt{q q^{\prime}}}.
\end{equation}
		Let $F_{k,a}\left(\ell, \ell'\right)$ denote the inner sum. The identity $n=\sum_{d_1 \mid n} \varphi(d_1)$ allows to write that
		$$
		F_{k,a}\left(\ell, \ell'\right) = \sum_{\substack{d_1 \mid {N_k} \\
				\omega\left(d_1\right) \leqslant J_k / 2 \\
				\left(d_1, \ell \ell^{\prime}\right)=1}} \frac{\varphi\left(d_1\right)}{d_1 } \sum_{\substack{n_1, n_1^{\prime} \mid {N_k} \\
				\left(n_1, \ell d_1\right)=\left(n_1^{\prime}, \ell^{\prime} d_1\right)=1 \\
				\omega\left(n_1\right), \omega\left(n_1^{\prime}\right) \leqslant {J_k} / 2-\omega\left(d_1\right)}} \frac{ a(n_1)a(n'_1)}{ a^2((n_1,n'_1)) \sqrt{n_1 n_1^{\prime}} }.$$ 
We now restrict the inner sum to the case where $(n_1,n'_1)=1$,
so that 
		$$ F_{k,a}\left(\ell, \ell'\right) \geqslant \sum_{\substack{d_1 \mid {N_k} \\
				\omega\left(d_1\right) \leqslant {J_k} / 2 \\
				\left(d_1, \ell \ell^{\prime}\right)=1}} \frac{\varphi\left(d_1\right)}{d_1} \sum_{\substack{n'_1 \mid N_k \\ \left(n_1^{\prime}, \ell^{\prime} d_1\right)=1 \\ \omega\left(n_1^{\prime}\right) \leqslant {J_k} / 2-\omega\left(d_1\right)}} \frac{a(n'_1)}{  \sqrt{n_1^{\prime}} }\sigma\left(a,\frac{1}{2} {J_k}-\omega\left(d_1\right), \ell d_1n'_1\right),
		$$
where $$ \sigma(a,R,r)= \sum_{\substack{n \mid {N_k} \\
				\omega\left(n\right) \leqslant R \\
				\left(n,r\right)=1}} \frac{a(n)}{\sqrt{n}}. $$ 
Since
		$$
		\frac{\varphi(d_1)}{d_1} \gg \exp \left\{-\sum_{p \in {I_k}} \frac{1}{p}\right\} \gg 1 \quad\left(d_1 \mid {N_k} \right),
		$$
		it follows that
		$$
		F_{k,a}\left(\ell, \ell'\right) \gg \sum_{\substack{d_1 \mid {N_k} \\ \omega\left(d_1\right) \leqslant {J_k} / 2 \\\left(d_1, \ell \ell^{\prime}\right)=1}} \sum_{\substack{n'_1 \mid N_k \\ \left(n_1^{\prime}, \ell^{\prime} d_1\right)=1 \\ \omega\left(n_1^{\prime}\right) \leqslant {J_k} / 2-\omega\left(d_1\right)}} \frac{a(n'_1)}{  \sqrt{n_1^{\prime}} }\sigma\left(a,\frac{1}{2} {J_k}-\omega\left(d_1\right), \ell d_1n'_1\right).
		$$
		By performing the same calculation with the outer sum of the relation (\ref{eq5}), we obtain
$$ S_{1/2}(\mathcal{M}_k,a) \gg \sum_{\substack{d_1, d_2 \mid {N_k} \\ \omega\left(d_j\right) \leqslant {J_k} / 2 \\\left(d_1, d_2 \right)=1 }} \sum_{\substack{n_2, n'_2 \mid {N_k} \\\left(n_2n'_2, d_1d_2 \right)=1  \\ \omega\left(n_2\right) \leqslant {J_k} / 2 - \omega\left(d_2\right) \\ \omega\left(n'_2\right) \leqslant {J_k} / 2 - \omega\left(d_2\right) \\ (n_2, n'_2)=1 }}  \frac{ a'(n_2)a'(n'_2)}{\sqrt{n'_2n_2}} \sum_{\substack{n'_1 \mid N_k \\ \left(n_1^{\prime}, d_2n_2^{\prime} d_1\right)=1 \\ \omega\left(n_1^{\prime}\right) \leqslant {J_k} / 2-\omega\left(d_1\right)}} \frac{a(n'_1)}{  \sqrt{n_1^{\prime}} }\sigma\left(a,\frac{1}{2} {J_k}-\omega\left(d_1\right), d_2n_2 d_1n'_1\right).$$
We introduce
		$$
		j_k:=\left\lfloor\frac{\lambda}{k} \sqrt{\frac{\log N}{\log _2 N \log _3 N}}\right\rfloor
		$$
where $\lambda$ is a bounded parameter, and we restrict the previous outer sum to pairs $\left(d_1, d_2\right)$ such that $\omega\left(d_j\right) \leqslant \frac{1}{2} J_k-j_k$, and the inner sum to integers $n_2, n_2^{\prime}$ such that $\omega\left(n_2\right)=\omega\left(n_2^{\prime}\right)=j_k$. Together with the relation
		\begin{equation*}
		\sigma\left(a,\frac{1}{2} {J_k}-\omega\left(d_1\right), n_2d_2d_1n'_1\right) \geqslant \sigma\left(a,j_k, n_2d_2d_1n'_1\right),
  \end{equation*}
  we get 
$$ S_{1/2}(\mathcal{M}_k,a) \gg \sum_{\substack{d_1, d_2 \mid {N_k} \\ \omega\left(d_j\right) \leqslant {J_k} / 2-j_k \\\left(d_1, d_2 \right)=1 }} \sum_{\substack{n_2, n'_2 \mid {N_k} \\\left(n_2n'_2, d_1d_2 \right)=1  \\ \omega\left(n_2\right), \omega\left(n'_2\right)= j_k \\ (n_2, n'_2)=1 }}  \frac{ a'(n_2)a'(n'_2)}{\sqrt{n'_2n_2}} \sum_{\substack{n'_1 \mid N_k \\ \left(n_1^{\prime}, d_2n_2^{\prime} d_1\right)=1 \\ \omega\left(n_1^{\prime}\right) \leqslant {J_k} / 2-\omega\left(d_1\right)}} \frac{a(n'_1)}{  \sqrt{n_1^{\prime}} }\sigma\left(a,j_k, n_2d_2d_1n'_1\right).$$
We set $$
		T=\frac{2 k \mathrm{e}(\sqrt{u}-1) u^{k / 2}}{\alpha} \sqrt{\log _3 N}\left\{1+O\left(\frac{\sqrt{\log _2 N \log _3 N}}{\sqrt{\log N}}\right)\right\},$$ so that according to the relation (2.10) of \cite{dlBT}, we have  \begin{equation*} \label{(2)}
		    \sigma\left(a,j_k, n_2n'_1 d_2 d_1\right) \geqslant  T^{j_k} \mathrm{e}^{o\left(j_k\right)}.
		\end{equation*}
Therefore,
		$$ S_{1/2}(\mathcal{M}_k,a) \gg T^{j_k} \mathrm{e}^{o\left(j_k\right)} \sum_{\substack{d_1, d_2 \mid {N_k} \\ \omega\left(d_j\right) \leqslant {J_k} / 2-j_k \\\left(d_1, d_2 \right)=1 }} \sum_{\substack{n_2, n'_2 \mid {N_k} \\\left(n_2n'_2, d_1d_2 \right)=1  \\ \omega\left(n_2\right), \omega\left(n'_2\right) \leqslant {J_k} / 2 - \omega\left(d_2\right) \\ (n_2, n'_2)=1 }}  \frac{ a'(n_2)a'(n'_2)}{\sqrt{n'_2n_2}} \sum_{\substack{n'_1 \mid N_k \\ \left(n_1^{\prime}, d_2n_2^{\prime} d_1\right)=1 \\ \omega\left(n_1^{\prime}\right) \leqslant {J_k} / 2-\omega\left(d_1\right)}} \frac{a(n'_1)}{  \sqrt{n_1^{\prime}} }.$$
Using a similar calculation, we have that $$\sum_{\substack{n'_1 \mid N_k \\ \left(n_1^{\prime}, d_2n_2^{\prime} d_1\right)=1 \\ \omega\left(n_1^{\prime}\right) \leqslant {J_k} / 2-\omega\left(d_1\right)}} \frac{a(n'_1)}{  \sqrt{n_1^{\prime}} } =  \sigma\left(a,{J_k} / 2-\omega\left(d_1\right), n'_2 d_2 d_1\right) \gg T^{j_k} \mathrm{e}^{o\left(j_k\right)},$$
thus 
  $$ S_{1/2}(\mathcal{M}_k,a) \gg T^{2j_k} \mathrm{e}^{o\left(j_k\right)} \sum_{\substack{d_1, d_2 \mid {N_k} \\ \omega\left(d_j\right) \leqslant {J_k} / 2 \\\left(d_1, d_2 \right)=1 }} \sum_{\substack{n_2, n'_2 \mid {N_k} \\\left(n_2n'_2, d_1d_2 \right)=1  \\ \omega\left(n_2\right), \omega\left(n'_2\right) \leqslant {J_k} / 2 - \omega\left(d_2\right) \\ (n_2, n'_2)=1 }}  \frac{ a'(n_2)a'(n'_2)}{\sqrt{n'_2n_2}} .$$
		By a similar calculation, we obtain
		$$
	 \sum_{\substack{n_2, n'_2 \mid {N_k} \\\left(n_2n'_2, d_1d_2 \right)=1  \\ \omega\left(n_2\right), \omega\left(n'_2\right) \leqslant {J_k} / 2 - \omega\left(d_2\right) \\ (n_2, n'_2)=1 }}  \frac{ a'(n_2)a'(n'_2)}{\sqrt{n'_2n_2}}\gg \left(\frac{T}{2} \right)^{2 j_k} \mathrm{e}^{o\left(j_k\right)}.
		$$
		Therefore,
		\begin{equation} \label{(3)}
		S_{1/2}(\mathcal{M}_k,a) \gg \left({\frac{T}{\sqrt{2}}}\right)^{4 j_k} \mathrm{e}^{o\left(j_k\right)} \sum_{\substack{d_1, d_2 \mid {N_k} \\
				\omega\left(d_j\right) \leqslant {J_k} / 2-j_k \\
				\left(d_1, d_2, \right)=1}} 1.
		\end{equation}
The relation (2.13) of \cite{dlBT} shows that $$ \sum_{\substack{d_1, d_2 \mid {N_k} \\
				\omega\left(d_j\right) \leqslant {J_k} / 2-j_k \\
				\left(d_1, d_2, \right)=1}} 1 \geqslant\left|\mathcal{M}_k\right|\left(\frac{b+o(1)}{2 k^2 u^k(u-1) \log _3 N}\right)^{2 j_k}, $$
so that $$
\frac{S_{1/2}(\mathcal{M}_k,a)}{\left|\mathcal{M}_k\right|} \gg\left(\frac{h}{\lambda^2}\right)^{2 j_k} \mathrm{e}^{o\left(j_k\right)},
$$
with $h:= \mathrm{e}^2 b(\sqrt{u}-1) /(\sqrt{u}+1)$.
Taking the product over $k$ from $1$ to $(\log_2N)^{\gamma \varphi(d)}$, we obtain: \begin{equation} \label{main rel}
 \frac{S_{1/2}(\mathcal{M},a)}{\left|\mathcal{M}\right|} \gg \mathcal{L}(N)^{\beta + o(1)},
 \end{equation}
with $\beta:=2 \gamma {\varphi(d)} \lambda \log \left(h / \lambda^2\right)$. The optimal choice $\lambda:=\sqrt{h} / \rm{e} $  gives
		$$
		\beta=\frac{4 \gamma{\varphi(d)} \sqrt{h}}{\mathrm{e}}=4 \gamma {\varphi(d)} \sqrt{\frac{ b(\sqrt{u}-1)}{\sqrt{u}+1}}= 4 \gamma {\varphi(d)}\sqrt{\frac{ b(\sqrt{u}-1)}{\sqrt{u}+1}}  .
		$$
		By choosing $b \gamma^2 \log u$ close to $1$ and then choosing $u$ close to $1$, we have $\sqrt{u}-1$ approaching $(\log u) /2$, and $\sqrt{u}+1$ approaching $2$. Consequently, $\beta$ converges to $2 \varphi(d)$.
\section{Large values of the Dedekind zeta function.}
We begin by recalling the definition of the Fourier transform of a function $f \in L^1(\mathbb{R})$, which is given by:
$$
\hat{f}(\xi):=\int_{\mathbb{R}} f(x) \mathrm{e}^{-i x \xi} \mathrm{d} x \quad(\xi \in \mathbb{R}) .
$$
Next, we present two lemmas.
\begin{lemma} \label{lemma3} \cite{dlBT}
 Let $\sigma \in [\frac{1}{2}, 1[$ and $F$ be a holomorphic function in the horizontal strip $y=\Im(z) \in [\sigma-2,0]$, satisfying the growth condition
$$
\sup _{\sigma-2 \leqslant y \leqslant 0}|F(z)| \leq \frac{1}{x^2+1},
$$
then, for all $s=\sigma+i t \in \mathbb{C}, t \neq 0$, we have
\begin{align*}
\int_{\mathbb{R}} \zeta_{K}(s+i u) \zeta_{K} & ( \bar{s} + iu) F(u) {\rm d} u
 \\=& \sum_{k,\ell >1} \frac{\hat{F}(\log k \ell) a(k)a(\ell)}{k^{s} \ell^{\bar{s}}} -2 \pi \zeta_{K}(1-2 i t) F(i s-i) -2 \pi \zeta_{K}(1+2 i t) F(i \bar{s} - i).
 \end{align*}
\end{lemma}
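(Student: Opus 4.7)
The plan is to establish the identity by a downward contour shift in the variable $u$ followed by term-by-term Fourier inversion, adapting the proof given for $\zeta$ in \cite{dlBT}.

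First I would set $I := \int_{\mathbb{R}} \zeta_K(s+iu)\zeta_K(\bar s + iu) F(u)\,\mathrm{d}u$ and displace the contour from the real axis down to the horizontal line $\Im u = \sigma - 2$, via the rectangle with corners $\pm R$ and $\pm R + i(\sigma - 2)$. On the shifted line both Dedekind factors have real part equal to $2$, and hence admit absolutely convergent Dirichlet series expansions. The two vertical sides of the rectangle contribute negligibly as $R \to \infty$: one combines the standard convexity bound for $\zeta_K$ on vertical lines (polynomial growth in the $t$-aspect) with the pointwise estimate $|F(z)| \leqslant 1/(x^2+1)$ guaranteed throughout the strip.

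Inside the rectangle the integrand is meromorphic with only two simple poles, namely $u_1 = is - i$ and $u_2 = i\bar s - i$, inherited from the simple pole of $\zeta_K$ at $1$; both satisfy $\Im u_j = \sigma - 1 \in (\sigma - 2, 0)$. A direct residue computation, using that $\zeta_K(\bar s + iu_1) = \zeta_K(1-2it)$ and $\zeta_K(s + iu_2) = \zeta_K(1+2it)$, together with the elementary identity $\mathrm{Res}_{u=u_j}\zeta_K(\cdot + iu) = -i\,\mathrm{Res}_{z=1}\zeta_K(z)$, reproduces exactly the two boundary terms $-2\pi\zeta_K(1-2it)F(is-i)$ and $-2\pi\zeta_K(1+2it)F(i\bar s - i)$ on the right-hand side.

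On the shifted contour one then inserts the Dirichlet expansion
$$\zeta_K(s+iu)\zeta_K(\bar s + iu) = \sum_{k,\ell \geqslant 1} \frac{a(k)\,a(\ell)}{k^{s+iu}\,\ell^{\bar s + iu}},$$
and swaps the sum with the integral by Fubini -- legitimate by absolute convergence of the Dirichlet series on $\Im u = \sigma - 2$ combined with the integrability of $F$. Each inner integral $\int (k\ell)^{-iu} F(u)\,\mathrm{d}u$ along the shifted line can then be pushed back up to the real axis without crossing any singularity (since $F$ is holomorphic throughout the strip), producing precisely $\hat F(\log k\ell)$. Assembling the three contributions yields the claimed identity.

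The main technical obstacle is the quantitative justification of the contour displacement and of the interchange of summation and integration: both rest on pairing a Phragmén--Lindel\"of/convexity bound for $\zeta_K$ in the $t$-aspect with the strong decay of $F$ in the strip, together with a divisor-type bound on $a(n)$ to handle absolute convergence of the final series. These estimates are routine but must be carried out with care; once they are in place, the assembly of the three pieces is purely bookkeeping.
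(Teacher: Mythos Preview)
Your proposal is correct and is precisely the standard contour-shift argument. Note that the paper does not supply its own proof of this lemma: it is quoted from \cite{dlBT}, where the identical argument (shift $\Im u$ down to $\sigma-2$, pick up the two residues from the pole of the zeta function at $1$, expand the Dirichlet series on the shifted line, and shift each term back to recover $\hat F$) is carried out for $\zeta$ in place of $\zeta_K$. Your adaptation to $\zeta_K$ is the expected one, and the ingredients you flag---a convexity bound for $\zeta_K$ on vertical lines and the decay of $F$ in the strip---are exactly what is needed.

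One small remark: your residue computation correctly produces the factor $\mathrm{Res}_{z=1}\zeta_K(z)$, which for a general cyclotomic field is a constant $\rho_K\neq 1$. The lemma as stated in the paper omits this factor (it is literally transcribed from the $\zeta$-case where the residue equals $1$), so strictly speaking your argument yields $-2\pi\rho_K\,\zeta_K(1\mp 2it)F(\cdots)$ rather than the displayed terms. This is a cosmetic imprecision in the statement rather than a flaw in your proof, and it is harmless downstream since those terms are only used via the crude bound $|I_2(T)|+|I_3(T)|\ll T|\mathcal{M}|/\log T$.
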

Let $T>1$ be a fixed value. Following \cite{A}, we consider the function.
$$
K_{\mathrm{\eta}}(u):=\frac{\sin ^{2 \mathrm{\eta}}\left(\left(\frac{1}{\mathrm{\eta}} \varepsilon \log T\right) u\right)}{\left(\frac{1}{\mathrm{\eta}} \varepsilon \log T\right)^{2 \mathrm{\eta}-1} u^{2 \mathrm{\eta}}},
$$
with $\varepsilon$ small and $\mathfrak{\eta}$ large in $\mathbb{N}$ to be chosen. 
 \begin{lemma} \cite{A}
Let $K_{\mathrm{\eta}}(u)$ be as above. Then $\widehat{K_{\mathrm{\eta}}}(v)$ is a real and even function, satisfying $0 \leqslant \widehat{K_{\mathrm{\eta}}}(v) \leqslant \widehat{K}_{\mathrm{\eta}}(0)$ and decreasing on $[0, \infty[$ with
$$
\left|\frac{d}{d v} \widehat{K_{\mathrm{\eta}}}(v)\right| \leqslant \frac{\widehat{K_{\mathrm{\eta}-1}}(0)}{\frac{1}{\mathrm{\eta}} \varepsilon \log T}.
$$
Moreover, for large values of $\mathfrak{\eta}$, we have
$$
\widehat{K_{\mathrm{\eta}}}(0) \sim \sqrt{\frac{3 \pi}{\mathrm{\eta}}}.
$$
\end{lemma}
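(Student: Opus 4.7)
The plan is to exploit the identification $K_\eta(u) = \alpha \bigl(\sin(\alpha u)/(\alpha u)\bigr)^{2\eta}$ with $\alpha := \varepsilon \log T/\eta$, which presents $K_\eta$ as a classical Fej\'er-type kernel. Realness and evenness of $\widehat{K_\eta}$ are immediate, since $K_\eta$ is real and even. For non-negativity, together with the structural properties needed later, I would exploit the identity
\[
\widehat{K_\eta}(v) \;=\; \frac{\pi}{(2\alpha)^{2\eta-1}}\, B_{2\eta}(v),
\]
where $B_{2\eta}$ denotes the $2\eta$-fold convolution of $\mathbf{1}_{[-\alpha,\alpha]}$ with itself. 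This identity follows from the classical computation $\widehat{\mathbf{1}_{[-\alpha,\alpha]}}(v) = 2\sin(\alpha v)/v$ combined with the convolution theorem (applied via Fourier inversion). In particular, $\widehat{K_\eta}$ is a non-negative B-spline, supported in $[-2\varepsilon\log T, 2\varepsilon\log T]$.

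Monotonicity on $[0,\infty)$ then follows because convolutions of symmetric unimodal functions remain symmetric and unimodal: iterating from the symmetric unimodal factor $\mathbf{1}_{[-\alpha,\alpha]}$, the spline $B_{2\eta}$ inherits these properties and is decreasing on $[0,\infty)$. This yields $0 \leqslant \widehat{K_\eta}(v) \leqslant \widehat{K_\eta}(0)$ for all $v$.

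For the derivative bound, I would differentiate under the integral sign to obtain
\[
\Bigl|\tfrac{d}{dv}\widehat{K_\eta}(v)\Bigr| \;\leqslant\; \int_{\mathbb{R}} |u|\,K_\eta(u)\,du.
\]
The substitution $w = \alpha u$ converts the right-hand side into $\alpha^{-1}\int_{\mathbb{R}} \sin^{2\eta}(w)/|w|^{2\eta-1}\,dw$, while the analogous substitution applied to $\widehat{K_{\eta-1}}(0) = \int K_{\eta-1}(u)\,du$ yields $\int_{\mathbb{R}} (\sin w/w)^{2\eta-2}\,dw$, a quantity independent of any scale parameter. Factoring
\[
\frac{\sin^{2\eta}(w)}{|w|^{2\eta-1}} \;=\; \frac{\sin^{2\eta-2}(w)}{w^{2\eta-2}} \cdot \frac{\sin^2(w)}{|w|},
\]
the required inequality reduces to the pointwise estimate $\sin^2 w \leqslant |w|$ on $\mathbb{R}$, which is immediate upon splitting at $|w|=1$ (using $|\sin w| \leqslant |w|$ on one side and $\sin^2 w \leqslant 1 \leqslant |w|$ on the other).

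Finally, the same substitution gives $\widehat{K_\eta}(0) = \int_{\mathbb{R}} (\sin w/w)^{2\eta}\,dw$, independent of $\alpha$. Laplace's method at the unique maximum $w = 0$, using the expansion $\log(\sin w/w) = -w^2/6 + O(w^4)$ so that $(\sin w/w)^{2\eta} = \exp\bigl(-\eta w^2/3 + O(\eta w^4)\bigr)$, together with the rescaling $w = s/\sqrt{\eta}$, reduces the integral to a Gaussian and produces $\widehat{K_\eta}(0) \sim \sqrt{3\pi/\eta}$ as $\eta \to \infty$. The main conceptual obstacle is spotting the clean reduction in the derivative bound to $\sin^2 w \leqslant |w|$; once this is in place, the remaining arguments are routine.
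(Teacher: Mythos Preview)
The paper does not prove this lemma at all: it is quoted verbatim from \cite{A} and used as a black box. Your proposal, by contrast, supplies a complete and correct argument. The identification $K_\eta(u)=\alpha\bigl(\sin(\alpha u)/(\alpha u)\bigr)^{2\eta}$ with $\alpha=\varepsilon\log T/\eta$ is correct, and the B-spline representation $\widehat{K_\eta}(v)=\pi(2\alpha)^{1-2\eta}B_{2\eta}(v)$ immediately delivers non-negativity, compact support, and unimodality. Your derivative bound is clean: the reduction to the pointwise inequality $\sin^2 w\leqslant |w|$ works as stated, and you correctly observe that $\widehat{K_{\eta-1}}(0)=\int_{\mathbb{R}}(\sin w/w)^{2\eta-2}\,dw$ is scale-invariant, so the possibly different normalization $\alpha_{\eta-1}=\varepsilon\log T/(\eta-1)$ in $K_{\eta-1}$ causes no trouble. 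The Laplace asymptotic for $\widehat{K_\eta}(0)$ is standard and accurate. In short, there is nothing to compare against in the paper itself, and your write-up stands on its own.
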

\subsection{ Proof of Theorem \ref{main thm}} 
We specialize Soundararajan’s resonance method \cite{Sound res} to the Dedekind zeta function. Let $\beta \in ]0, 1[$, and let $\kappa = 1 - \beta$. Throughout the following, we assume that $d \ll (\log_2 T^\kappa)^A$, where $A$ is the positive real number introduced in Section \ref{sec3.1}.\\
Let $\mathcal{M}$ be a set of integers with cardinality less than or equal to $N$. We define:
$$
\mathcal{M}_j:=\mathcal{M} \cap \left](1+1 / T)^j \cdot(1+1 / T)^{j+1}\right] \quad(j \geqslant 0),
$$
We also denote $h_j:=\min \mathcal{M}_j$ when $\mathcal{M}_j \neq \emptyset$, and we define $\mathcal{H}$ as the set of all $h_j$. We then define $r: \mathcal{H} \rightarrow \mathbb{R}^{+}$ by the formula:$$r\left(h_j\right)^2:=\sum_{m \in \mathcal{M}_j} 1.$$
We consider the resonance factor in the form of $|R(t)|^2$ with:
$$
R(t)=\sum_{h \in \mathcal{M}} \frac{r(h)}{h^{i t}}.
$$
We also consider the Gaussian density $\Phi(t) := e^{-t^2 / 2}$, so that $\hat{\Phi}(y) = \sqrt{2 \pi} \Phi(y)$.
We introduce the following notations: 
$$
\begin{aligned}
\xi (t, u) & :=\zeta_{K}\left(\frac{1}{2}+i t+i u\right) \zeta_{K} \left(\frac{1}{2}-i t+i u\right) K_\eta (u), \\
I(T) & :=\int_{\mathbb{R}} \int_{\mathbb{R}}|R(t)|^2 \Phi\left(\frac{t \log T}{T}\right)  \xi (t, u) {\rm d} u {\rm d} t .
\end{aligned}
$$
These notations are similar to those used in \cite{dlBT}, except for the regularization factor $K_\eta$, which differs from theirs and corresponds to the one considered in \cite{A}. Indeed, the importance of this function lies precisely in controlling the large values of the Dedekind zeta function, which can potentially be higher than those of the Riemann zeta function studied in
\cite{dlBT}.\par
We denote $$Z_\beta(T) = \max_{ T^\beta \leq t \leq T } \left|\zeta_{K} \left(\frac{1}{2}+ it \right)\right|.$$
According to Lemma 5 of \cite{BS1},
  \begin{equation} \label{eq6}
\int_{\mathbb{R}}|R(t)|^2 \Phi \left(\frac{t \log T}{T}\right) \mathrm{d} t \ll \frac{T|\mathcal{M}|}{\log T}.
 \end{equation}
\begin{lemma} \label{lemma5}
For large $T$, we have for $\eta = 2 \varphi(d)$ and $N = \lfloor T^\kappa \rfloor$ that \begin{equation*}
I(T) + O( {|\mathcal{M}|} T \log^2 T)= \int_{2 T^\beta \leq |t| \leq T / 2}|R(t)|^2 \Phi\left(\frac{t \log T}{T}\right) \int_{\mathbb{R}} \xi(t, u) \mathrm{d} u \mathrm{~d}t.
\end{equation*}
\end{lemma}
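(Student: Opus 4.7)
The plan is to prove Lemma \ref{lemma5} by splitting the $t$-integration in $I(T)$ into three ranges — $|t| \leq 2T^{\beta}$, $2T^{\beta} \leq |t| \leq T/2$, and $|t| \geq T/2$ — and showing that the first and third ranges together contribute at most $O(|\mathcal{M}|T\log^{2}T)$. Fubini then identifies the middle range with the stated right-hand side.

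The tail range $|t| \geq T/2$ is immediate from Gaussian decay: one has $\Phi(t\log T/T) \leq \exp(-(\log T)^{2}/8)$ for such $t$, and this dominates any polynomial growth of $|R(t)|^{2} \leq |\mathcal{M}|^{2}$ and of $\int_{\mathbb{R}}|\xi(t,u)|\, du$ (polynomially bounded via a convexity estimate for $\zeta_{K}$ on the critical line together with the integrability of $K_{\eta}$ on $\mathbb{R}$). The contribution is therefore super-polynomially small in $T$.

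For the central range $|t| \leq 2T^{\beta}$, I would apply Lemma \ref{lemma3} with $F = K_{\eta}$ and $s = 1/2 + it$ to rewrite
\[
\int_{\mathbb{R}} \xi(t,u)\, du = \sum_{k,\ell \geq 1} \frac{\widehat{K_{\eta}}(\log k\ell)\, a(k)a(\ell)}{k^{1/2+it}\ell^{1/2-it}} - 2\pi\zeta_{K}(1-2it)K_{\eta}(-t-i/2) - 2\pi\zeta_{K}(1+2it)K_{\eta}(t-i/2).
\]
Since $\widehat{K_{\eta}}$ is essentially supported in $[-2\varepsilon\log T, 2\varepsilon\log T]$, only pairs with $k\ell \leq T^{2\varepsilon}$ contribute to the Dirichlet sum; combining standard divisor-type bounds on $a$ with $\widehat{K_{\eta}}(0) \sim \sqrt{3\pi/\eta}$ and choosing $\varepsilon$ sufficiently small yields a uniform estimate $|D(t)| \ll \log^{C}T$ on the central range, for a bounded constant $C$. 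Integrating this against $|R(t)|^{2}\Phi(t\log T/T)$ via the mean-value bound (\ref{eq6}) then produces a contribution of size $O(|\mathcal{M}|T\log^{C-1}T)$, which fits comfortably inside the allowed error once $C \leq 3$.

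The main obstacle is the pair of polar terms, which are individually singular at $t=0$: near the origin $\zeta_{K}(1 \pm 2it) \sim \pm i\rho/(2t)$, where $\rho$ denotes the residue of $\zeta_{K}$ at $s=1$. The key observation is that their \emph{sum} is regular at $t=0$: Taylor-expanding $K_{\eta}(\pm t - i/2) = K_{\eta}(-i/2) \pm t\,K_{\eta}'(-i/2) + O(t^{2})$ and pairing with the Laurent expansions of $\zeta_{K}(1 \pm 2it)$ shows that the $1/t$-singularities cancel by symmetry, leaving only a bounded remainder of order $|K_{\eta}'(-i/2)|$. Controlling $K_{\eta}$ and its derivative at $\pm i/2$ through the $\sinh$-representation of $\sin$ at imaginary arguments produces a polylogarithmic bound for the combined polar contribution across the entire central range, so that integrating against $|R(t)|^{2}\Phi$ via (\ref{eq6}) once more absorbs it into $O(|\mathcal{M}|T\log^{2}T)$. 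Without this cancellation the logarithmic divergence of $\int 1/|t|\,dt$ near the origin would defeat the stated bound, which is why identifying and exploiting this symmetry is the crux of the argument.
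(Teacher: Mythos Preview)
Your three-range decomposition and the treatment of $|t|>T/2$ are fine and match the paper. The gap is in the range $|t|\leqslant 2T^{\beta}$, where you pass through Lemma~\ref{lemma3} and try to bound the Dirichlet piece and the polar piece separately. Neither piece is polylogarithmic. For the Dirichlet sum, $\widehat{K_{\eta}}$ is supported on $[-2\varepsilon\log T,2\varepsilon\log T]$, so you are summing $a(k)a(\ell)/\sqrt{k\ell}$ over $k\ell\leqslant T^{2\varepsilon}$; even with the crude bound $(a*a)(n)\leqslant\tau_{2\varphi(d)}(n)$ this sum is of order $T^{\varepsilon}(\log T)^{2\varphi(d)-1}$, a genuine power of~$T$ that no choice of a fixed $\varepsilon>0$ can remove. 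The polar terms are just as bad: with $\alpha=\varepsilon(\log T)/\eta$ one has
\[
K_{\eta}(-i/2)=\frac{4^{\eta}\sinh^{2\eta}(\alpha/2)}{\alpha^{2\eta-1}}\asymp \frac{T^{\varepsilon}}{(\log T)^{4\varphi(d)-1}},
\]
and $K_{\eta}'(-i/2)$ is of the same order, so even after the cancellation you correctly identified at $t=0$ the combined polar contribution is still $\asymp T^{\varepsilon}$. Multiplying either bound by $\int|R|^{2}\Phi\ll T|\mathcal{M}|/\log T$ from \eqref{eq6} (or by $R(0)^{2}\cdot T^{\beta}$) yields at best $|\mathcal{M}|\,T^{1+\varepsilon}$, which overshoots the target $|\mathcal{M}|\,T\log^{2}T$. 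The point is that in the identity of Lemma~\ref{lemma3} the Dirichlet series and the polar terms cancel to a large extent; bounding them individually throws this away.

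The paper avoids this altogether by never invoking Lemma~\ref{lemma3} on the small-$t$ range. It splits the $u$-integral instead: for $|u|>T^{\beta}$ the convexity bound $|\zeta_{K}(\tfrac12+iv)|\ll (cd(1+|v|))^{\varphi(d)/4}$ together with $|K_{\eta}(u)|\ll(d|u|)^{-2\eta}$ (with $\eta=2\varphi(d)$) makes that tail $\ll 1$; for $|u|\leqslant T^{\beta}$ one uses $2|ab|\leqslant|a|^{2}+|b|^{2}$ and the second-moment bound $\int_{|v|\leqslant 3T^{\beta}}|\zeta_{K}(\tfrac12+iv)|^{2}\,dv\ll T^{\beta}\log^{2}T$ to get $\ll\widehat{K_{\eta}}(0)\,T^{\beta}\log^{2}T$. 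Then the trivial estimate $|R(t)|^{2}\leqslant R(0)^{2}\leqslant N|\mathcal{M}|=T^{1-\beta}|\mathcal{M}|$ on $|t|\leqslant 2T^{\beta}$ converts this into the desired $O(|\mathcal{M}|T\log^{2}T)$.
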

\begin{proof}
We are first interested in
\begin{equation} \label{(5)}
 \int_{|t| \leqslant 2 T^\beta} \int_{|u|>  T^\beta} \zeta_{K}\left(\frac{1}{2}+i t+i u\right) \zeta_{K} \left(\frac{1}{2}-i t+i u\right) K_\eta(u) {\rm d}t {\rm d}u .
 \end{equation}
We begin by using the upper bound for the Dedekind zeta function as given in \cite{A},$$ \left| \zeta_{K}\left(\frac{1}{2}+i t+i u\right) \zeta_{K} \left(\frac{1}{2}-i t+i u\right) \right| \ll \left( cd(1+|t|+|u|) \right)^{ \varphi(d)/2}.$$
From the definition of $K_\eta$, since $ d \ll (\log_2T)^A$, we have $$ |K_\eta(u)| \ll \frac{1}{[d(2|u|+1)]^{\eta}}. $$
The inner integral of (\ref{(5)}) then becomes: $$\ll  \int_{|u|>  T^\beta} \frac{\left( cd(1+|t|+|u|) \right)^{ \varphi(d)/2}}{[d(1+|t|+|u|)]^{2\varphi(d)}} {\rm d} u \ll 1.$$ 
By completing with the outer integral, we have that: $$ \int_{|t| \leqslant 2 T^\beta} \int_{|u|>  T^\beta} \zeta_{K}\left(\frac{1}{2}+i t+i u\right) \zeta_{K} \left(\frac{1}{2}-i t+i u\right) K_\eta(u) {\rm d} t {\rm d} u \ll T^\beta. $$
Now, let's focus on the term 
$$ \int_{|t| \leqslant 2 T^\beta} \int_{|u| \leqslant  T^\beta} \zeta_{K}\left(\frac{1}{2}+i t+i u\right) \zeta_{K} \left(\frac{1}{2}-i t+i u\right) K_\eta (u) {\rm d}t {\rm d}u.$$
We use $2|a b| \leqslant|a|^2+|b|^2$ so that 
\begin{align*}
	 \int_{|t| \leqslant 2 T^\beta} \int_{|u| \leq  T^\beta } \xi(t, u) {\rm d} u {\rm d} t & \ll \int_{|\mathrm{t}| \leqslant 2 T^\beta} \int_{|\mathrm{u}| \leqslant T^\beta}\left|\zeta_{K} \left(\frac{1}{2}+i t+i u\right)\right|^2 K_\eta(u) {\rm d} u {\rm d} t \\ & \ll \widehat{K_{\mathrm{\eta}}}(0) \int_{|\mathrm{t}| \leqslant 3T^\beta}\left|\zeta_{K} \left(\frac{1}{2}+i t\right)\right|^2 {\rm d} t.
\end{align*}
Next, we use the second-order moment bound of the Dedekind zeta function from \cite{WH} and we get
$$ \int_{|t| \leqslant 2 T^\beta} \int_{u \in \mathbb{R} } 3(t, u) {\rm d} u {\rm d} t \ll {\widehat{K_{\mathrm{\eta}}}(0)}T^\beta \log^2 T + T^\beta   \ll  T^\beta \log^2 T .$$
Since  $ \Phi\left(t \log T/{T}\right) \leqslant 1 $ and $ |R(t)|^2 \leqslant R(0)^2, $ it follows that
$$
\int_{|t| \leq 2 T^\beta}|R(t)|^2 \Phi\left(\frac{t \log T}{T}\right) \int_{\mathbb{R}} \xi(t, u) {\rm d} u {\rm d} t \ll  R(0)^2 T^\beta \log^2 T.
$$
However, the rapid decay of $\Phi$ allows to write
$$
\int_{|t|>T / 2}|R(t)|^2 \Phi\left(\frac{t \log T}{T}\right) \int_{\mathbb{R}} \xi(t, u) \mathrm{d} u \mathrm{~d} t \ll R(0)^2
.$$ 
Noting that $$R(0)^2 \leqslant N \sum_{h \in \mathcal{H}} r(h)^2 \leqslant N |\mathcal{M}|,$$ this completes the proof of the lemma.
\end{proof}
Throughout the rest of the paper, we assume definitely $\eta = 2\varphi(d)$, $N = \lfloor T^\kappa \rfloor$,  so that $d \ll (\log_2 N)^A$.
\begin{lemma} For a large $T$, we have
    $$I(T) + O({|\mathcal{M}|} T \log^2 T) \ll \frac{T {|\mathcal{M}|}}{\log T} Z_\beta(T)^2. $$
\end{lemma}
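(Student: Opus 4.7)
The plan is to invoke the preceding lemma, which reduces the task to showing
\[
J := \int_{2T^\beta \leq |t| \leq T/2} |R(t)|^2 \, \Phi\!\left(\frac{t \log T}{T}\right) \int_{\mathbb{R}} \xi(t,u)\,du\,dt \;\ll\; \frac{T|\mathcal{M}|}{\log T}\, Z_\beta(T)^2.
\]
The natural move is to split the inner integral at $|u|=T^\beta$ and treat the two ranges separately.

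In the main range $|u| \leq T^\beta$, for $2T^\beta \leq |t| \leq T/2$ the triangle inequality forces both $|{\pm}\,t+u|$ to lie in $[T^\beta, T]$ (for $T$ large enough). Since $\zeta_K$ has real Dirichlet coefficients, $|\zeta_K(\frac{1}{2}+is)| = |\zeta_K(\frac{1}{2}-is)|$ for real $s$, so by the very definition of $Z_\beta(T)$ each factor $|\zeta_K(\frac{1}{2}+i({\pm}\,t+u))|$ is bounded by $Z_\beta(T)$. Consequently $|\xi(t,u)| \leq Z_\beta(T)^2 K_\eta(u)$, and integrating in $u$ contributes at most $\widehat{K_\eta}(0) \sim \sqrt{3\pi/\eta} \ll 1$ (by the previous lemma and $\eta = 2\varphi(d) \geq 4$). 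Applying (\ref{eq6}) to handle the outer integral then yields a main contribution $\ll Z_\beta(T)^2 T|\mathcal{M}|/\log T$, exactly the target size.

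For the tail $|u| > T^\beta$, I would reuse the convexity bound $|\zeta_K(\frac{1}{2}+it+iu)\zeta_K(\frac{1}{2}-it+iu)| \ll (cd(1+|t|+|u|))^{\varphi(d)/2}$ and the decay $|K_\eta(u)| \ll [d(1+|u|)]^{-2\varphi(d)}$ already recorded in the proof of the preceding lemma. Splitting cases at $|u|=|t|$ and integrating in $u$ bounds the inner integral by $T^{-\beta(3\varphi(d)/2-1)} + T^{\varphi(d)/2-\beta(2\varphi(d)-1)}$, after which (\ref{eq6}) again controls the outer integral; for the admissible range of $\beta$, the resulting tail contribution is absorbed into $O(|\mathcal{M}|T\log^2 T)$.

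The main obstacle is this tail estimate in the subregion $T^\beta < |u| < |t|$, where the convexity factor $(1+|t|+|u|)^{\varphi(d)/2}$ can reach $T^{\varphi(d)/2}$ as $|t|$ approaches $T/2$: here one really needs the sharper decay exponent $2\varphi(d)$ of $K_\eta$ (as opposed to the weaker regularizer used in \cite{dlBT}) to suppress this growth. This is precisely why the regularizer $K_\eta$ of \cite{A} with $\eta=2\varphi(d)$ is retained in this variant; the remaining steps are routine adaptations of \cite{dlBT, A}.
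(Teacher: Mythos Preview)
Your treatment of the range where both $\zeta_K$ factors are controlled by $Z_\beta(T)$ is correct. The gap is in the tail, precisely in the subregion $T^\beta < |u| < |t|$ that you yourself flag as the main obstacle. Your own inner-integral bound there is $T^{\varphi(d)/2 - \beta(2\varphi(d)-1)}$, and after applying \eqref{eq6} to the outer integral this part of the tail contributes $\ll \tfrac{T|\mathcal{M}|}{\log T}\, T^{\varphi(d)/2 - \beta(2\varphi(d)-1)}$. For this to be absorbed into $O(|\mathcal{M}|T\log^2 T)$ you need $\varphi(d)/2 \leq \beta(2\varphi(d)-1)$, i.e.\ $\beta \geq \varphi(d)/(4\varphi(d)-2)\geq 1/4$. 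Even with the sharper actual decay $|K_\eta(u)| \ll |u|^{-2\eta} = |u|^{-4\varphi(d)}$ the constraint only relaxes to roughly $\beta \geq 1/8$. But the lemma is stated for arbitrary $\beta\in\,]0,1[$, and the subsequent deduction of Theorem~\ref{main thm} requires $\beta$ arbitrarily close to $0$ to reach the exponent $(1+o(1))\varphi(d)$. So the $K_\eta$ decay does \emph{not} suppress the convexity growth $T^{\varphi(d)/2}$ for small $\beta$, contrary to what your final paragraph asserts.

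The paper sidesteps this by splitting the inner integral at the $t$-dependent threshold $|u|=|t|/2$ rather than at the fixed threshold $|u|=T^\beta$. In the tail $|u|>|t|/2$ one has $1+|t|+|u|\ll |u|$, so the convexity factor becomes $\ll(d|u|)^{\varphi(d)/2}$ and is entirely absorbed by the $K_\eta$ decay in $|u|$; the inner integral is $\ll 1$ uniformly in $t$, with no constraint on $\beta$, and then \eqref{eq6} handles the outer integral. In the complementary range $|u|\leq |t|/2$ one has $|\pm t+u|\in[T^\beta,T]$ just as in your argument, yielding the $Z_\beta(T)^2$ bound. Replacing your fixed cut at $T^\beta$ by the adaptive cut at $|t|/2$ repairs the proof with essentially no extra work.
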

\begin{proof} We have
\begin{align*}
	 \int_{|u|> |t|/2} \xi(t, u) \mathrm{d} u &=  \int_{|u|> |t|/2} \zeta_{K}\left(\frac{1}{2}+i t+i u\right) \zeta_{\mathbb{K}} \left(\frac{1}{2}-i t+i u\right) K(u) {\rm d} u  \ll  1.
\end{align*}
It follows from (\ref{eq6}) that $$ \int_{2 T^\beta \leq |t| \leq T / 2}|R(t)|^2 \Phi\left(\frac{t \log T}{T}\right) \int_{|u|> |t|/2} \xi(t, u) \mathrm{d} u \mathrm{~d}t \ll \frac{T {|\mathcal{M}|}}{ \log T}.$$
Using Lemma \ref{lemma5}, we obtain
 $$I(T) + O({|\mathcal{M}|} T \log^2 T) = \int_{2 T^\beta \leq |t| \leq T / 2}|R(t)|^2 \Phi\left(\frac{t \log T}{T}\right) \int_{|u| \leq |t|/2} \xi(t, u) \mathrm{d} u \mathrm{~d}t.$$ 
Therefore, the proof of the lemma follows from the observation that 
$$ \int_{2 T^\beta \leq |t| \leq T / 2}|R(t)|^2 \Phi\left(\frac{t \log T}{T}\right) \int_{|u| \leq |t|/2} 3(t, u) \mathrm{d} u \mathrm{~d}t \ll \frac{T {|\mathcal{M}|}}{\log T} Z_\beta(T)^2.$$
\end{proof}
Now, we associate $I(T)$ with the Gál sum from Section \ref{sec3}. \\
Applying Lemma \ref{lemma3} with $F=K_\eta$ yields.
$$
I(T)=I_1(T)+I_2(T)+I_3(T),
$$
where we have defined
$$
\begin{aligned}
& I_1(T):=\int_{\mathbb{R}} \sum_{k,\ell >1} \frac{\hat{K_\eta}(\log k \ell) a(k)a(\ell)}{ \sqrt{k\ell } (k/ \ell )^{it}} |R(t)|^2 \Phi\left(\frac{t \log T}{T}\right) \mathrm{d} t, \\
& I_2(T):=-2 \pi \int_{\mathbb{R}} \zeta_{K}(1-2 i t) K_\eta\left(-t-\frac{1}{2} i\right)|R(t)|^2 \Phi\left(\frac{t \log T}{T}\right) \mathrm{d} t, \\
& I_3(T):=-2 \pi \int_{\mathbb{R}} \zeta_{K}(1+2 i t) K_\eta\left(t-\frac{1}{2} i\right)|R(t)|^2 \Phi\left(\frac{t \log T}{T}\right) \mathrm{d} t,
\end{aligned}
$$
so that by using $$\left|I_2(T)\right|+\left|I_3(T)\right| \ll T{|\mathcal{M}|} / \log T,$$
we obtain $$
\frac{T{|\mathcal{M}|}}{\log T} Z_\beta(T)^2 \gg I_1(T)+O({|\mathcal{M}|} T \log^2 T) .
$$
In the following, we consider the set $\mathcal{M}$ to be the one defined in Section \ref{sec3.1}. 
\begin{lemma} \label{tron} For a large $T$, we have
    $$ I_1(T) \gg \frac{T}{\log T} \sum_{\substack{m, n \in \mathcal{M} \\ [m, n] /(m, n) \leqslant T^{\varepsilon/ 3 \eta}}} a \left(\frac{m}{(m,n)}\right) a\left( \frac{n}{(m,n)}\right )\sqrt{\frac{(m, n)}{[m, n]}}. $$
\end{lemma}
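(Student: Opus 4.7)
My plan is to expand $|R(t)|^2 = \sum_{h, h' \in \mathcal{H}} r(h) r(h') (h'/h)^{it}$, interchange summation with the $t$-integral, evaluate the latter by Fourier inversion, and then extract the desired Gál sum by discarding non-negative terms and applying a pre-image counting argument. Using $\hat{\Phi}(y) = \sqrt{2\pi}\, \Phi(y)$, the $t$-integral $\int_{\mathbb{R}} (\ell h'/(kh))^{it}\, \Phi(t \log T/T)\, \mathrm{d}t$ evaluates to $(T\sqrt{2\pi}/\log T)\, \Phi(T \log(\ell h'/(kh))/\log T)$, so $I_1(T)$ becomes a quadruple sum over $(k, \ell, h, h')$ with a non-negative summand (since $\widehat{K_\eta} \geqslant 0$ by Lemma 4 and all other factors are non-negative). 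I may therefore drop terms freely to obtain a lower bound.

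Next, for each pair $(m, n) \in \mathcal{M}^2$ satisfying $[m,n]/(m,n) \leqslant T^{\varepsilon/3\eta}$, I retain only the diagonal quadruple $(k, \ell, h, h') = (m/(m,n),\, n/(m,n),\, h_{j(m)},\, h_{j(n)})$, where $j(m)$ denotes the index with $m \in \mathcal{M}_{j(m)}$. On the diagonal, $k\ell = [m,n]/(m,n)$, so the combination $a(k)a(\ell)/\sqrt{k\ell}$ is precisely the summand $a(m/(m,n))\, a(n/(m,n))\, \sqrt{(m,n)/[m,n]}$ appearing in the target. Two elementary estimates then show that the remaining kernel and Gaussian factors are bounded away from zero: the truncation $\log(k\ell) \leqslant (\varepsilon/3\eta) \log T$, combined with the derivative estimate $|\widehat{K_\eta}\,'(v)| \leqslant \eta\, \widehat{K_{\eta-1}}(0)/(\varepsilon \log T)$ of Lemma 4 and the asymptotic $\widehat{K_{\eta-1}}(0) \sim \widehat{K_\eta}(0) \asymp \eta^{-1/2}$, gives $\widehat{K_\eta}(\log k\ell) \geqslant \tfrac{2}{3}\widehat{K_\eta}(0)(1 + o(1))$; and since $m$ and $h = h_{j(m)} = \min \mathcal{M}_{j(m)}$ both lie in the interval $((1+1/T)^{j(m)}, (1+1/T)^{j(m)+1}]$, one has $\ell h'/(k h) = (n/h')/(m/h) = 1 + O(1/T)$, so the Gaussian factor is $\geqslant 1/2$ for $T$ large.

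The main obstacle is to eliminate the parasitic weight $r(h) r(h')$ still attached to each diagonal term, since the target Gál sum carries no such factor. I handle this by pre-image counting: given a quadruple $(k, \ell, h, h')$ with $(k, \ell) = 1$ in the image, any pre-image $(m, n) \in \mathcal{M}^2$ must satisfy $m \in \mathcal{M}_{j(h)}$ and $n = m\ell/k \in \mathcal{M}_{j(h')}$, so there are at most $\min(|\mathcal{M}_{j(h)}|, |\mathcal{M}_{j(h')}|) \leqslant \sqrt{|\mathcal{M}_{j(h)}|\, |\mathcal{M}_{j(h')}|} = r(h) r(h')$ such pairs. Regrouping the sum over $(m,n)$ as one over image quadruples with multiplicity bounded by $r(h) r(h')$ exactly cancels the offending weight and produces the claimed lower bound, modulo the polylogarithmically decaying factor $\widehat{K_\eta}(0)$, which is absorbed into the implicit constant of $\gg$ since $\eta = 2\varphi(d) \ll (\log_2 T)^A$.
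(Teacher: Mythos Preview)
Your argument is essentially the paper's: expand $I_1(T)$, Fourier-invert the $t$-integral, restrict by positivity, bound $\widehat{K_\eta}(\log k\ell)$ below via the derivative estimate of Lemma~4 (the paper instead outsources this step to Lemma~6 of \cite{A}), and eliminate the resonator weights by the same pre-image count $\#\{(m,n)\} \leqslant r(h)r(h')$.

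There is one bookkeeping slip. With your choice $(k,\ell,h,h') = \bigl(m/(m,n),\,n/(m,n),\,h_{j(m)},\,h_{j(n)}\bigr)$ one actually has
\[
\frac{\ell h'}{kh} \;=\; \frac{n\,h_{j(n)}}{m\,h_{j(m)}} \;=\; \frac{n}{m}\cdot\frac{h_{j(n)}}{h_{j(m)}} \;\approx\; \Bigl(\frac{n}{m}\Bigr)^{2},
\]
not $(n/h')/(m/h)$ as you write, so the Gaussian factor is \emph{not} bounded below unless $m=n$. The fix is immediate: swap the roles of $k$ and $\ell$ (take $k=n/(m,n)$, $\ell=m/(m,n)$), after which $\ell h'/(kh)=(m/h_{j(m)})\big/(n/h_{j(n)})=1+O(1/T)$ as intended. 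By the symmetry of $a(k)a(\ell)/\sqrt{k\ell}$ and of $k\ell=[m,n]/(m,n)$, nothing else in the argument changes; the paper's own final restriction in fact carries the same labeling inconsistency.
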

\begin{proof} We have
$$
\begin{aligned}
I_1(T) & =\frac{T \sqrt{2 \pi}}{\log T} \sum_{h, h' \in \mathcal{H}} r(h) r(h') \sum_{k, \ell \geqslant 1} \frac{\hat{K_\eta}(\log k \ell) a(\ell)a(k)}{\sqrt{k \ell}} \Phi\left(\frac{T}{\log T} \log \frac{h k}{h' \ell}\right), \\
& \gg \frac{T}{\log T} \sum_{k, \ell \geqslant 1 } \frac{\hat{K_\eta}(\log k \ell)a(\ell)a(k)}{\sqrt{k \ell}} \sum_{h, h' \in \mathcal{H}} r(h) r(h') \Phi\left(\frac{ T}{\log T} \log \frac{h k}{h' \ell}\right).
\end{aligned}
$$
Following the same technique used in the proof of Lemma 6 in \cite{A}, we obtain 
$$ I_1(T) \gg \frac{{\widehat{K_{\mathrm{\eta}}}(0)}T}{\log T} \sum_{1 \leqslant k \ell \leq T^{\varepsilon/3\eta}} \frac{a(\ell)a(k)}{\sqrt{k \ell}} \sum_{h, h' \in \mathcal{H}} r(h) r(h') \Phi\left(\frac{ T}{\log T} \log \frac{h k}{h' \ell}\right).$$
When $k$ and $\ell$ are fixed and if $h \in \mathcal{M}_i$ and $h' \in \mathcal{M}_j$, we have
$$
\sum_{\substack{m \in M_i, n \in M_j \\ m k=n \ell}} 1 \leqslant \min \left\{r(h)^2, r(h')^2\right\} \leqslant r(h) r(h'),
$$
so that
$$
\sum_{\substack{m \in \mathcal{M}_i, n \in \mathcal{M}_j \\ m k=n \ell }} \Phi\left(\frac{T}{\log T} \log \frac{h k}{h' l}\right) \leqslant r(h) r(h') \Phi\left(\frac{T}{\log T} \log \frac{h k}{h' \ell}\right) .
$$
In the left-hand side, the argument of $\Phi$ is $\ll 1/\log T$. Therefore, by summing over $i$ and $j$, we obtain
$$
I_1(T) \gg \frac{T}{\log T} \sum_{\substack{m \in \mathcal{M}, n \in \mathcal{M} \\ m k=n \ell }} \sum_{\substack{1 \leqslant k \ell \leqslant T^{ \varepsilon / 3\eta}}} \frac{a(\ell)a(k)}{\sqrt{k \ell}}.
$$
For a pair $(m, n) \in \mathcal{M}^2$,
we restrict the inner sum to pairs $(k, \ell)$ such that
$$
k= \frac{m}{(m,n)}, \quad \ell=\frac{n }{(m,n)}, \quad k \ell=[m, n] /(m, n) \leqslant T^{ \varepsilon / 3\eta},
$$
so that
\begin{equation} \label{(6)}
I_1(T) \gg \frac{T}{\log T} \sum_{\substack{m, n \in \mathcal{M} \\ [m, n] /(m, n) \leqslant T^{\varepsilon/ 3\eta}}} a \left(\frac{m}{(m,n)}\right) a\left( \frac{n}{(m,n)}\right ) \sqrt{\frac{(m, n)}{[m, n]}} .
\end{equation}
\end{proof}
However, using the Rankin’s trick, we get
\begin{align*}
 & \sum_{\substack{m, n \in \mathcal{M} \\ [m, n] /(m, n) \leqslant T^{\varepsilon/ 3n}}} a \left(\frac{m}{(m,n)}\right) a\left( \frac{n}{(m,n)}\right )\sqrt{\frac{(m, n)}{[m, n]}}  \geqslant S_{1/2}(\mathcal{M},a)  - \frac{S_{1/3}(\mathcal{M},a)}{T^{\varepsilon / 18\eta}},
\end{align*} 
where $$S_{1/3}(\mathcal{M},a) = \sum_{\substack{m, n \in \mathcal{M} }} a \left(\frac{m}{(m,n)}\right) a\left( \frac{n}{(m,n)}\right ) \left(\frac{(m, n)}{[m, n]}\right)^{1/3}. $$
Therefore, from Lemma \ref{tron}, we have
\begin{equation} \label{(7)}
Z_\beta(T)^2 \gg \frac{S_{1/2}(\mathcal{M},a)}{|\mathcal{M}|}-\frac{S_{1/3}(\mathcal{M},a)}{|\mathcal{M}| T^{\varepsilon / 18 \eta}}+O\left((\log T)^3\right).
\end{equation}
We set $y_{\mathcal{M}}:=\max _{m \in \mathcal{M}} P^{+}(m)$ where $ P^{+}(m)$ is the largest prime factor of $m$, with the convention $P^{+}(1) = 1$,
 so that 
$$
\frac{S_{1/3}(\mathcal{M},a)}{|\mathcal{M}|} \ll \exp \left\{ \varphi(d)y_{\mathcal{M}}^{2 / 3}\right\}.
$$
Since $y_{\mathrm{M}} \ll \log T \log_2 T \ll(\log T)^{6 / 5}$ for sufficiently large $T$, we have  $$ \frac{S_{1/3}(\mathcal{M},a)}{T^{\varepsilon/18 \eta}|\mathcal{M}|} \ll \frac{\exp \left\{ \varphi(d)y_{\mathcal{M}}^{2 / 3}\right\}}{T^{\varepsilon/18 \eta}} \longrightarrow 0.$$ This implies that the negative term in (\ref{(7)}) is negligible.
Now, we replace $N=\left\lfloor T^{1-\beta}\right\rfloor$ so that based on relation (\ref{main rel}), we obtain
$$
Z_\beta(T) \gg \mathcal{L}(T)^{\varphi(d)\sqrt{1-\beta}+o(1)}.
$$
This completes the proof of Theorem \ref{main thm}.
The corollary \ref{main cor} follows trivially from this result.
\vspace{1cm}

\noindent\textbf{Acknowledgements:} The author extends their heartfelt appreciation to Professor Régis de la Bretèche for his meticulous guidance throughout every stage of this project. 
The   author is supported by funding from a scholarship of IMJ-PRG during his Master thesis.


\begin{thebibliography}{0}
	
	
\bibitem{A} A. Bondarenko, P. Darbar, M. V. Hagen, W. Heap, K. Seip, A dichotomy for extreme values of zeta
and Dirichlet L-functions, preprint, \textcolor{blue}{arxiv.2302.0828}

\bibitem{BS1} A. Bondarenko and K. Seip. \emph{Large greatest common divisor sums and extreme values of the Riemann zeta function}, Duke Math. J. {\bf 166}, 
	(2017) 1685--1701.
	

\bibitem{dlBT} R. de la Bret\`{e}che and G. Tenenbaum, \emph{Sommes de G\'al et applications}, Proc. London Math. Soc. {\bf 119} (2019), 104--134. 
	
\bibitem{WH} W. Heap, \emph{The Twisted Second Moment of the Dedekind Zeta Function of a Quadratic Field} , {International Journal of Number Theory} {\bf 10}, (2013), 235--281.
	
	
\bibitem{Sound res} K. Soundararajan, \emph{Extreme values of zeta and $L$-functions}, Math. Ann. {\bf 342} (2008), 467--486.

	
\bibitem{Wash} L. C. Washington, \emph{Introduction to Cyclotomic Fields}, Graduate Texts in Mathematics, vol. 83, 2nd ed. Springer-Verlag, New York, 1997.
	
		
\end{thebibliography}
\end{document}